\documentclass{amsart}

\usepackage{amsmath}
\usepackage{amsfonts}
\usepackage{amssymb}
\usepackage{amsthm}
\usepackage{hyperref}
\usepackage{color}
\theoremstyle{plain}
\newtheorem{lem}{Lemma}[]
\newtheorem{cor}[lem]{Corollary}
\newtheorem{prop}[lem]{Proposition}
\newtheorem{thm}[lem]{Theorem}
\theoremstyle{definition}
\newtheorem{ex}[lem]{Example}
\newtheorem{disc}[lem]{Remark}
\renewcommand{\leq}{\leqslant}

\newcommand{\pd}{\operatorname{pd}}	
\newcommand{\depth}{\operatorname{depth}}	
\newcommand{\len}{\operatorname{length}}
\newcommand{\HH}{\operatorname{H}}
\newcommand{\spec}{\operatorname{Spec}}
\newcommand{\ideal}[1]{\mathfrak{#1}}
\newcommand{\fm}{\ideal{m}}
 \newcommand{\fp}{\ideal{p}}
\newcommand{\supp}{\operatorname{Supp}}
\newcommand{\bbz}{\mathbb{Z}}
\newcommand{\xra}{\xrightarrow}
\newcommand{\xla}{\xleftarrow}

\begin{document}

\bibliographystyle{amsplain}

\title[Peskine and Szpiro's Dimension inequality]{Peskine and Szpiro's
  Dimension inequality holds\\ over local rings of depth at most one}

\author{Lars Winther Christensen}
\address{Texas Tech University\\
  Lubbock, TX 79409, U.S.A.}  \email{lars.w.christensen@ttu.edu}
\urladdr{https://larswinther.github.io/homepage/}

\author{Saeed Nasseh}
\address{Department of Mathematical Sciences\\
  Georgia Southern University\\
  Statesboro, GA 30460, U.S.A.}  \email{snasseh@georgiasouthern.edu}
\urladdr{https://cosm.georgiasouthern.edu/math/saeed.nasseh}

\thanks{L.W.C.\ was partly supported by Simons Foundation
  collaboration grant 962956}


\keywords{Dimension Inequality; fiber
  product; Intersection Theorem.}

\subjclass[2020]{13D22}

\begin{abstract}
  We prove that the general version---the one that includes the
  dimension of the tensor product---of Peskine and Szpiro's Dimension
  Inequality holds over noetherian local rings of depth at most one.
\end{abstract}

\maketitle

\noindent Let $R$ be a commutative noetherian local ring. Serre
\cite{Serre} proved that if $R$ is regular and $M$ and $N$ are
finitely generated $R$-modules, then the inequality
$$
\dim_R M + \dim_R N \leq \dim R + \dim_R(M \otimes_R N)
$$
holds. It was conjectured by Peskine and Szpiro \cite{Peskin1} that
this inequality also would hold over a singular local ring, as long as
one of the modules, say $M$, has finite projective dimension. In this
paper, we will refer to this inequality as the \emph{Dimension
  Inequality}, though some authors reserve that name for the special
case where the $R$-module $M \otimes_R N$ has finite length and hence
dimension $0$.

An example  by Ma~\cite[\S2.3]{Ma} shows that the Dimension
Inequality does not hold in general. To be precise, Ma constructs a
Cohen--Macaulay local ring of dimension $3$ over which the
inequality fails. In that light, one asks under what extra assumptions
on the ring the inequality is valid. Our contribution is the following.

\begin{thm}
  \label{thm:main}
  Let $R$ be a commutative noetherian local ring and $M$ and $N$ be
  finitely generated $R$-modules. If $\pd_R{M}$ is finite and
  $\depth R \leqslant 1$ holds, then one has
  \begin{equation}
    \tag{1}
    \dim_R M + \dim_R N \leq \dim R + \dim_R(M \otimes_R N)\:.
  \end{equation}
\end{thm}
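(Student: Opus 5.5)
By the Auslander--Buchsbaum formula, $\pd_R M = \depth R - \depth_R M \leq \depth R \leq 1$ whenever $M \neq 0$. So I will reduce to two cases: $M$ free, or $\pd_R M = 1$. The second case forces $\depth R = 1$ and gives a minimal free resolution $0 \to R^m \xra{\varphi} R^n \to M \to 0$. In each case I will compute $\supp_R M$ directly from this resolution and then bound $\dim_R(M\otimes_R N)$ from below using $\supp_R(M\otimes_R N) = \supp_R M \cap \supp_R N$.

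If $M = 0$, both sides of (1) are $-\infty$ under the usual convention, so there is nothing to prove. If $M \cong R^n$ with $n \geq 1$, then $\dim_R M = \dim R$ and $M \otimes_R N \cong N^n$, so (1) holds with equality.

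Now assume $\pd_R M = 1$ with the resolution above, and split according to the rank $n-m$.

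If $n > m$, let $\fp$ be a minimal prime of $R$. Then $\depth R_\fp = 0$, and $\pd_{R_\fp} M_\fp$ is finite. Auslander--Buchsbaum over $R_\fp$ then makes $M_\fp$ free, and localizing the resolution shows its rank is $n-m > 0$. Hence every minimal prime lies in $\supp_R M$, so $\supp_R M = \spec R$. This gives $\supp_R(M\otimes_R N) = \supp_R N$, so $\dim_R(M\otimes_R N) = \dim_R N$, and (1) follows from $\dim_R M \leq \dim R$.

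If $n = m$, then $\varphi$ is an injective square matrix. By McCoy's theorem, its determinant $x = \det\varphi$ is a nonzerodivisor on $R$. Cramer's rule ($\operatorname{adj}(\varphi)\varphi = x\cdot 1$) shows $x$ annihilates $M$. Conversely, $M_\fp = 0$ whenever $x \notin \fp$, because then $\varphi_\fp$ is invertible. Therefore $\supp_R M = V(x)$. Since $x$ is a nonzerodivisor, it lies in no minimal prime (those are associated primes), so
\[
\dim_R M = \dim R/(x) \leq \dim R - 1.
\]
On the other side, $\supp_R(M\otimes_R N) = V(x) \cap \supp_R N = \supp(N/xN)$. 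Krull's principal ideal theorem gives $\dim_R (N/xN) \geq \dim_R N - 1$. One must check $N/xN \neq 0$, which holds by Nakayama since $x \in \fm$; here $N \neq 0$ may be assumed, otherwise (1) is trivial. Adding the two estimates yields
\[
\dim_R M + \dim_R N \leq (\dim R - 1) + (\dim_R(M\otimes_R N) + 1),
\]
which is (1).

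The only step I expect to need care is the case $n=m$: identifying $\supp_R M$ with $V(\det\varphi)$ and knowing that $\det\varphi$ is a nonzerodivisor. McCoy's theorem handles this: a square matrix over a commutative ring is injective exactly when its determinant is a nonzerodivisor. If one prefers to avoid McCoy, an alternative is to note that $M_\fp = 0$ for every associated prime $\fp$ of $R$ (by Auslander--Buchsbaum in depth $0$ together with rank $0$). Then $\operatorname{ann}_R M$ contains a nonzerodivisor, and one can replace $x$ by such an element contained in $\fm$ and argue identically.
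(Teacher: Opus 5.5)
Your route differs from the paper's, since you avoid the Intersection Theorem altogether. As written, however, the case $n=m$ has a gap at precisely the step you flagged. The two facts you cite both prove the \emph{same} inclusion $\supp_R M\subseteq V(x)$: that $x=\det\varphi$ annihilates $M$, and that $M_\fp=0$ whenever $x\notin\fp$. The second is just the contrapositive of that inclusion, not its converse. This inclusion is enough for $\dim_R M\leq\dim R-1$. The lower bound $\dim_R(M\otimes_R N)\geq\dim_R(N/xN)$, however, needs the reverse inclusion $V(x)\subseteq\supp_R M$, and you never justify it.

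The reverse inclusion is true and easy. Suppose $x\in\fp$ but $M_\fp=0$. Then $\varphi_\fp$ is surjective, so $\varphi\otimes_R\kappa(\fp)$ is a surjective, hence bijective, endomorphism of $\kappa(\fp)^n$. Its determinant, which is the image of $x$ in $\kappa(\fp)$, is therefore nonzero, a contradiction. Equivalently, $\supp_R M=V(\operatorname{Fitt}_0 M)$ and $\operatorname{Fitt}_0 M=(x)$. With this line added, the case $n=m$ is complete.

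The same oversight breaks your suggested alternative. A nonzerodivisor $y\in\operatorname{ann}_R M\cap\fm$ only gives $\supp_R M\subseteq V(y)$, and this can be strict. For example, take $y=xg$ with $g\in\fm$ a nonzerodivisor and $V(g)\not\subseteq V(x)$. Then $\supp_R(N/yN)$ need not lie in $\supp_R(M\otimes_R N)$, so the argument does not go through ``identically''. On that route the bound $\dim_R N\leq 1+\dim_R(M\otimes_R N)$ must come from elsewhere, and that is exactly where the paper invokes Peskine and Szpiro's Intersection Theorem (2).

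Your positive-rank case is essentially the paper's: you apply Auslander--Buchsbaum at minimal primes where the paper counts lengths. The rank-zero case is where you genuinely differ. The paper only needs $\dim_R M\leq\dim R-1$ there and gets the other half from the Intersection Theorem. You instead prove the needed instance of that theorem directly: the support of $M$ is cut out by the single nonzerodivisor $\det\varphi$, so Krull's principal ideal theorem applies. Once repaired, your proof is self-contained and elementary. Like the paper's argument (as the paper's remark notes), it uses only $\pd_R M\leq 1$ rather than $\depth R\leq 1$.
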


\begin{proof}
  By Peskine and Szpiro's Intersection Theorem, the inequality
  \begin{equation}
    \tag{2}
    \dim_RN \leq \pd_RM + \dim_R(M \otimes_R N)
  \end{equation}
  holds. It follows from the Auslander--Buchsbaum Formula and the
  assumptions on $M$ and $R$ that $\pd_RM$ is $0$ or $1$. In the
  former case, $M$ is free, so $\dim_R M = \dim R$ holds and thus
  $(1)$ follows from $(2)$ by adding this quantity to both side.

  Now assume that $\pd_RM = 1$ holds and consider the minimal free
  resolution
  \begin{equation*}
    0 \longrightarrow R^{a} \longrightarrow R^{b} \longrightarrow M \longrightarrow 0
  \end{equation*}
  where $a \leq b$ are positive integers. For every minimal prime
  ideal $\fp$ in $R$ it induces an exact sequence
  \begin{equation}
    \tag{3}
    0 \longrightarrow R_{\fp}^{a} \longrightarrow R_{\fp}^{b} \longrightarrow M_{\fp} \to 0
  \end{equation}
  of finite length $R_{\fp}$-modules.

  If $a = b$ holds, then $(3)$ yields
  $\len_{R_{\fp}}(M_{\fp})=0$ and thus $M_{\fp}=0$ for every minimal
  prime ideal $\fp$ of $R$. It follows that
  $\dim_RM \leq \dim R-1$ holds, and adding this to $(2)$ yields:
  \begin{align*}
    \dim_RM + \dim_RN & \leq \dim R-1+\pd_RM+\dim_R(M\otimes_RN)\\
                      &= \dim R + \dim_R(M \otimes_R N) \:.
  \end{align*}
  If $a<b$ holds, then $(3)$ yields
  $$\len_{R_{\fp}}(M_{\fp}) = (b-a)\len_{R_{\fp}}(R_{\fp})>0$$ and,
  therefore, $M_{\fp}\neq 0$ for every minimal prime ideal $\fp$ in
  $R$. Thus, $\supp_R M$ is all of $\spec R$, whence one has
  $$\supp_R(M\otimes_R N) = \supp_R M \cap \supp_R N = \supp_R N.$$ In
  particular, the equalities $\dim_R M =\dim R$ and
  $\dim_R N = \dim_R (M \otimes_R N)$ hold, whence equality holds in
  $(1)$.
\end{proof}

\begin{disc}\label{disc20260929b}
  The assumption on the projective dimension of $M$ in
  Theorem~\ref{thm:main} can not be dropped; see \cite[Exa.\
  18.5.4]{dcmca}. Also, the proof shows that, the assumption
  $\depth R\leq 1$ can be replaced by the weaker assumption
  $\pd_R M\leq 1$.
\end{disc}

Theorem~\ref{thm:main} extends to complexes as follows:

\begin{cor}\label{cor20260929a}
  Let $R$ be a commutative noetherian local ring and $M$ and $N$ be
  $R$-complexes with finitely generated homology modules and
  $\HH_i(M) = 0 = \HH_i(N)$ for $i \ll 0$. If $\pd_R (\HH_i(M))$ is
  finite for all $i \in \bbz$ and $\depth R \leq 1$ holds, then one has
  \begin{equation*}
    \dim_R M + \dim_R N \leq \dim R + \dim_R(M \otimes_R^{\bf L} N)\:.
  \end{equation*}
\end{cor}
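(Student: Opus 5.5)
We reduce the corollary to Theorem~\ref{thm:main} by working homology module by homology module. Recall the conventions: for an $R$-complex $X$ with bounded below, finitely generated homology, $\dim_R X = \sup\{\dim_R \HH_i(X) - i : i\in\bbz\}$ (Foxby's dimension, with $\dim_R X = -\infty$ if $X$ is acyclic). For a finitely generated module $K$, placed in degree $0$, this is the usual Krull dimension. We also use $\supp_R(M\otimes^{\bf L}_R N) = \supp_R M \cap \supp_R N$, where $\supp_R X = \bigcup_i \supp_R \HH_i(X)$. This holds for complexes with bounded below, finitely generated homology by Nakayama's lemma for complexes: over each $R_\fp$, the infimum of the homology of the derived tensor product is the sum of the infima.

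We may assume $M$ and $N$ are not acyclic; otherwise the left-hand side is $-\infty$. Pick $i$ and $j$ attaining the suprema, so that $\dim_R M = \dim_R \HH_i(M) - i$ and $\dim_R N = \dim_R \HH_j(N) - j$. These exist because the homology is bounded below and all dimensions are bounded by $\dim R$. Apply Theorem~\ref{thm:main} to the modules $\HH_i(M)$, which has finite projective dimension, and $\HH_j(N)$. This gives
\begin{equation*}
\dim_R M + \dim_R N \leq \dim R + \dim_R(\HH_i(M)\otimes_R \HH_j(N)) - i - j.
\end{equation*}
It therefore suffices to show
\begin{equation*}
\dim_R(\HH_i(M)\otimes_R\HH_j(N)) - i - j \leq \dim_R(M\otimes^{\bf L}_R N).
\end{equation*}

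The main obstacle is this last inequality. The module $\HH_i(M)\otimes_R\HH_j(N)$ need not appear as a homology module of $M\otimes^{\bf L}_R N$ in degree $i+j$, because lower homology can interfere. My plan is to localize. Let $\fp$ be a prime in $\supp(\HH_i(M)\otimes\HH_j(N))$ with $\dim R/\fp$ equal to the dimension of this tensor product. Since $\fp \in \supp M\cap\supp N$, the complex $(M\otimes^{\bf L}_R N)_\fp$ is nonzero, and Nakayama gives
\begin{equation*}
\inf (M\otimes^{\bf L}N)_\fp = \inf M_\fp + \inf N_\fp \leq i+j.
\end{equation*}
Hence some $\HH_k(M\otimes^{\bf L}N)$ with $k\leq i+j$ contains $\fp$ in its support, and
\begin{equation*}
\dim_R(M\otimes^{\bf L}N) \geq \dim R/\fp - k \geq \dim R/\fp - i - j,
\end{equation*}
which is the required bound. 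Note that only the inequality $\inf(M\otimes^{\bf L}N)_\fp\le \inf M_\fp+\inf N_\fp$ is needed: the bottom homology of the derived tensor product over the local ring $R_\fp$ is the tensor product of the bottom homologies of $M_\fp$ and $N_\fp$, and that tensor product is nonzero by Nakayama.

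The remaining points are routine checks. First, the attainment of the suprema. Second, the fact that the hypothesis $\depth R\le 1$ is used only through Theorem~\ref{thm:main}. Third, the definition of $\dim$ for complexes that the paper adopts, presumably from \cite{dcmca}; I will confirm it agrees with the sup formula above.
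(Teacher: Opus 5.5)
Your proposal is correct and follows essentially the same route as the paper, which simply invokes the standard reduction to the module case. You pick $\HH_i(M)$ and $\HH_j(N)$ realizing $\dim_R M$ and $\dim_R N$, apply Theorem~\ref{thm:main} to these modules (this is where $\pd_R \HH_i(M)<\infty$ and $\depth R\leq 1$ enter), and then bound $\dim_R(\HH_i(M)\otimes_R\HH_j(N))-i-j$ by $\dim_R(M\otimes^{\bf L}_R N)$ via localization and Nakayama's lemma for complexes, and your verification of that last bound is sound.
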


\begin{proof}
  The proof of \cite[Thm.\ 20.1.34]{dcmca} applies verbatim: It is a
  standard reduction to the module case and the assumption on $M$
  ensures that Theorem~\ref{thm:main} applies.
\end{proof}

\begin{disc}
  If the $R$-complex $M$ in Corollary~\ref{cor20260929a} has bounded
  homology, then the assumption on the homology modules implies that
  $M$ has finite projective dimension; see for example Aihara et.\ al
  \cite[Thm.\ 4.1]{AAITY-14}
\end{disc}

This project arose from the question, does the Dimension Inequality
hold over fiber product rings?  As the next example shows, the answer
is affirmative for a particular class of fiber products. These fiber
product rings are precisely the local rings with decomposable maximal
ideal; see Ogoma~\cite[Lem.\ 3.1]{ogoma:edc}.

\begin{ex}\label{para20260927s}
  Let $(S,\mathfrak{s},k)$ and $(T,\mathfrak{t},k)$ be commutative
  noetherian local rings such that $S\neq k\neq T$ and let
  $R=S\times_k T$ be the fiber product of $S$ and $T$ over $k$, i.e.
  \begin{equation*}
    R=\left\{(s,t)\in S\times T\mid \pi_S(s)=\pi_T(t)\right\} 
  \end{equation*}
  where $S\xra{\pi_S} k\xla{\pi_T}T$ are the natural surjections. It
  is a commutative noetherian local ring with
  $\fm=\mathfrak{s}\oplus \mathfrak{t}$ and residue field $k$. By work
  of Lescot~\cite{lescot:sbpfal} one has
  \begin{equation*}
    \depth R=\min\{\depth S,\depth T,1\} \:.
  \end{equation*}
  In particular, $\depth R\leq 1$ holds and, therefore,
  Theorem~\ref{thm:main} applies.
\end{ex}

\begin{disc}\label{disc20260929a}
  The ring considered in \cite[Exa.~18.5.14]{dcmca}---already
  mentioned in Remark~\ref{disc20260929b}---is the fiber product
  $k[\![x]\!]\times_k k[\![y]\!]$. Thus, the assumption on the
  projective dimension of $M$ can not be dropped for fiber products.

  The local ring in Ma's~\cite[\S2.3]{Ma} counterexample to the
  Dimension Inequality is also a fiber product but, of course,
  different from those considered above: Ma's ring $A\times_CB$ is
  glued over a ring $C$ of dimension $2$ rather than over the residue
  field.
\end{disc}

Here is a local ring that is covered by Theorem~\ref{thm:main} and not
a fiber product:

\begin{ex}
  The ring $R=k[\![x,y]\!]/(y^2-x^3)$ is a Cohen--Macaulay local
  domain (hence not a fiber product) with $\dim R=1$, so
  Theorem~\ref{thm:main} applies to $R$. Set $M = R/(x)$ and
  $N = \mathfrak{m}$. As $x$ is not a zerodivisor, one has
  $\pd_R{M} = 1$ and $\dim_R M = 0 = \dim_R(M\otimes_RN)$. Since $R$ is
  a domain, $\dim_RN = \dim R = 1$ holds.  Thus equality holds in the
  Dimension Inequality $(1)$ as well as in the inequality $(2)$.
\end{ex}

In closing we note that the proof of \cite[Prop.\ B.5(1)]{Ma} adapts
to show that also the general version of the Dimension Inequality
holds for rings of dimension at most~$2$.

\begin{prop}\label{prop20260930a}
  Let $R$ be a commutative noetherian local ring and $M$ and $N$ be
  finitely generated $R$-modules. If $\pd_R M$ is finite and
  $\dim R\leq 2$ holds, then one has
  \begin{equation*}
    \dim_R M + \dim_R N \leq \dim R + \dim_R(M \otimes_R N)\:.
  \end{equation*}
\end{prop}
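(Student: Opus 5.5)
The plan is to reduce, via Theorem~\ref{thm:main}, to the case $\dim R = 2$ and $\depth R = 2$, and then argue directly with the Intersection Theorem (2).

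If $\depth R\leq 1$, Theorem~\ref{thm:main} applies verbatim. So assume $\depth R = 2 = \dim R$, that is, $R$ is Cohen--Macaulay of dimension $2$. By the Auslander--Buchsbaum Formula, $\pd_R M\in\{0,1,2\}$. The case $\pd_R M = 0$ is handled exactly as in the proof of Theorem~\ref{thm:main}. By Remark~\ref{disc20260929b}, the argument given there also covers $\pd_R M = 1$ regardless of depth. Hence only $\pd_R M = 2$ remains, and then $\depth_R M = 0$.

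In that case, consider the minimal free resolution $0\to R^{a}\to R^{b}\to R^{c}\to M\to 0$ and localize at a minimal prime $\fp$. This gives the alternating length identity $\len_{R_\fp}(M_\fp) = (c-b+a)\len_{R_\fp}(R_\fp)$, so the Euler characteristic $\chi = c-b+a$ decides the matter.

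\emph{If $\chi>0$}, then $M_\fp\neq 0$ for every minimal prime $\fp$. Hence $\supp_R M=\spec R$, and equality holds in the Dimension Inequality exactly as in the last case of the proof of Theorem~\ref{thm:main}.

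\emph{If $\chi = 0$}, then $M$ vanishes at all minimal primes, so $\dim_R M\leq 1$. Now split on $\dim_R M$.
\begin{itemize}
\item If $\dim_R M = 0$, then $\dim_R(M\otimes_R N)=0$ whenever $M\otimes_R N\neq 0$, and the claim reads $\dim_R N\leq 2$, which is trivial. If $M\otimes_R N=0$, then $N=0$ by Nakayama's lemma; the convention $\dim 0=-\infty$ then makes both sides $-\infty$, and this edge case needs a brief word.
\item If $\dim_R M = 1$, the claim is $\dim_R N\leq 1+\dim_R(M\otimes_R N)$. When $\dim_R(M\otimes_R N)\geq 1$, this is trivial since $\dim_R N\leq 2$. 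So the case to handle is $M\otimes_R N$ of finite length with $\dim_R N = 2$, and we must rule it out.
\end{itemize}

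This last situation is the main obstacle, and it is where I would adapt the argument of \cite[Prop.~B.5(1)]{Ma}. Since $\dim_R N=2$, some minimal prime $\fp$ of $R$ with $\dim R/\fp = 2$ lies in $\supp_R N$. Replace $N$ by $R/\fp$: the module $M\otimes_R R/\fp$ is still of finite length, because its support is $\supp M\cap V(\fp)\subseteq \supp(M\otimes N)=\{\fm\}$. So it suffices to show that $\dim R/\fp\leq 1+\dim(M/\fp M)$ fails to be violated.

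Let $Q$ be a minimal prime of $M$ with $\dim R/Q = 1$. Then $Q\not\supseteq \fp$, since otherwise $Q\in\supp M\cap V(\fp)=\{\fm\}$, contradicting $\dim R/Q=1$. Localize at $Q$: the ring $R_Q$ has dimension $\leq 1$, $M_Q$ has finite length and finite projective dimension, and $\pd_{R_Q} M_Q = \depth R_Q$. One route is to use the Intersection Theorem over $R$ for the pair $(M,R/\fp)$, which yields only $2\leq 2+0$, too weak. So I would instead work over the two-dimensional domain $\bar R = R/\fp$ and use the complex $F\otimes_R \bar R$, where $F$ is the resolution above. Its homology has finite length and it has length $2=\dim\bar R$. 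I expect to invoke the New Intersection Theorem (or Roberts' vanishing of $\chi$ for such complexes) to force $M/\fp M$ to have dimension at least $1$, giving the desired contradiction. Making this step precise, in particular showing that $\chi=0$ together with $\dim M=1$ prevents $\fp$ from meeting $\supp M$ only at $\fm$, is where the real work lies.
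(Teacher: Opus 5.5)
\textbf{There is a genuine gap, in the one case that carries the content of the proposition.}

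Your reductions are sound. You correctly get down to $\depth R=2$, $\pd_R M=2$, $\chi=c-b+a=0$, $\dim_R M=1$, $\dim_R N=2$ with $M\otimes_R N$ of finite length, and then replace $N$ by $R/\fp$. But you leave exactly this last case open. Moreover, the tools you suggest cannot close it:
\begin{itemize}
\item The complex $F\otimes_R R/\fp$ has length $2=\dim R/\fp$. So the New Intersection Theorem (and its improved form) is satisfied with equality and yields no contradiction, for the same reason you noticed that (2) only gives $2\leq 2+0$.
\item A length-$2$ free complex over a two-dimensional local domain with finite-length homology and $H_0\neq 0$ certainly exists, for instance a Koszul complex on a system of parameters. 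So any contradiction must use that $F$ comes from an acyclic complex over $R$ with $\chi=0$.
\item Roberts' vanishing concerns $\dim M+\dim N<\dim R$, which is not your situation.
\item Localizing at a one-dimensional $Q\in\supp_R M$ is blind to $\fp$, since $\fp\not\subseteq Q$.
\end{itemize}
The statement you need is global: the codimension-one part of $\supp_R M$ must meet every two-dimensional component of $\spec R$ in a curve.

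The missing ingredient is a nonunit nonzerodivisor $z\in R$ with $V(z)\subseteq\supp_R M$. The paper imports this from Ma's argument; here is one way to get it from your setup.
\begin{itemize}
\item Write $\phi\colon R^a\to R^b$ and $\psi\colon R^b\to R^c$ for the differentials. Then $\chi=0$ means $\operatorname{rank}\psi=c$.
\item The Buchsbaum--Eisenbud first structure theorem (MacRae's invariant) gives $I_c(\psi)=z\,I_a(\phi)$ with $\operatorname{grade} I_a(\phi)\geq 2$. Since $\operatorname{grade} I_c(\psi)\geq 1$, $z$ is a nonzerodivisor.
\item We have $\supp_R M=V(I_c(\psi))$. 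If $z$ were a unit, $I_c(\psi)$ would have height $2$, hence be $\fm$-primary, forcing $\dim_R M=0$. So $z$ is a nonunit, and $V(z)\subseteq\supp_R M$.
\end{itemize}
With $z$ in hand, your case closes. As a nonzerodivisor, $z\notin\fp$, so $V(\fp+zR)$ has dimension at least $1$. It lies in $\supp_R M\cap V(\fp)=\supp_R(M/\fp M)$, contradicting finite length.

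The paper uses $z$ more directly: $\supp_R(N/zN)\subseteq\supp_R(M\otimes_R N)$ and $\dim_R N\leq 1+\dim_R(N/zN)$. This needs no Cohen--Macaulay reduction, no Euler-characteristic split, and no passage to $R/\fp$. The paper's remaining case $\dim_R M=2$ is your $\chi>0$ case.
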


\begin{proof}
  First note that by Theorem~\ref{thm:main} we can assume that
  $\dim R=2$ holds.

  If $\dim_R M=0$ holds, then the asserted inequality holds trivially.
  If $\dim_R M=1$ holds, then there is, as in the proof
  of~\cite[Prop.\ B.5]{Ma}, a non-zero-divisor $z\in R$ with
  $V(z)\subseteq\supp_RM$ and, therefore,
  \begin{equation*}
    \supp_R(N/zN)=\supp_RN\cap
    V(z)\subseteq\supp_RN\cap\supp_RM=\supp_R(M\otimes_RN) \:.
  \end{equation*}
  Thus, $\dim_R(N/zN)\leq\dim_R(M\otimes_RN)$ holds, whence one has
  \begin{equation*}
    \dim_R N\leq 1+\dim_R(N/zN)\leq 1+\dim_R(M\otimes_RN)\:.
  \end{equation*}
  If $\dim_R M=2$ holds, then $\supp_RM=\spec R$, see again the proof
  of~\cite[Prop.\ B.5]{Ma}, and thus
  $\supp_R(M\otimes_RN)=\supp_RN$ holds. In particular,
  one has $\dim_R{N} = \dim_R(M\otimes_RN)$, and the asserted
  inequality follows; actually equality holds.
\end{proof}


\subsection*{AI disclosure}
This note arose from a conversation between the second author and
Claude (Anthropic), continuing work the authors had previously
started. Theorem~\ref{thm:main} was developed in the dialogue, with errors
caught directly or via Macaulay2 analysis by the authors. All
statements and proofs were written by the authors.

\providecommand{\bysame}{\leavevmode\hbox to3em{\hrulefill}\thinspace}
\providecommand{\MR}{\relax\ifhmode\unskip\space\fi MR }
\providecommand{\MRhref}[2]{%
  \href{http://www.ams.org/mathscinet-getitem?mr=#1}{#2} }
\providecommand{\href}[2]{#2}

\end{document}